\DeclareMathOperator{\ehr}{ehr}
\newcommand*\circled[2][0.8pt]{%
  \tikz[baseline=(t.base)]{%
    \node[inner sep=0pt, anchor=base] (t) {\(\mathstrut #2\)};%
    \node[draw,circle,inner sep=0pt,minimum size=3ex, yshift=#1] at (t.center) {};%
  }%
}
\newcommand*\emptycircle[1][0.8pt]{%
  \tikz[baseline=(t.base)]{%
    \node[inner sep=0pt, anchor=base] (t) {\(\mathstrut \vphantom{0}\)};%
    \node[draw,circle,inner sep=0pt,minimum size=3ex, yshift=#1] at (t.center) {};%
  }%
}
\title{Interpreting the Ehrhart coefficients of cross-polytopes}
\author{Krishna Menon and Emil Verkama}
\begin{document}
 
\maketitle

\begin{abstract}
    \noindent It is known that the Ehrhart polynomials of cross-polytopes, as well as of pyramids over them, have positive coefficients. 
    We give a combinatorial proof of this fact by showing that a scaled version of the Ehrhart polynomials are generating functions for certain colored permutations. 
    This answers a question posed by Stanley.
\end{abstract}

\section{Introduction}\label{sec:intro}

The Ehrhart polynomial \(\ehr_{\mathcal P}(n)\) of a lattice polytope \(\mathcal P\) counts the number of integer lattice points in the dilate \(n \mathcal P\). It is known that the Ehrhart polynomial of the cross-polytope
\begin{equation*}
  \mathcal P_d \coloneqq \left\{\bm x \in \RR^d : |x_1| + |x_2| + \ldots + |x_d| \leq 1\right\}
\end{equation*}
is determined by the generating function
\begin{equation*}
  \sum_{n \geq 0} \ehr_{\mathcal P_d}(n)\, x^n = \frac{(1+x)^d}{(1-x)^{d+1}}.
\end{equation*}
More generally, if \(k \in [0,d]\), the polynomial \(P_{d,k}(n)\) defined by
\begin{equation} \label{eq:polynomial def}
  \sum_{n \geq 0} P_{d,k}(n)\, x^n = \frac{(1+x)^k}{(1-x)^{d+1}}
\end{equation}
is the Ehrhart polynomial of the \((d-k)\)-fold pyramid over \(\mathcal P_k\). The relevant background can be found for example in \cite{ccd}. The figure below illustrates \(\mathcal P_3\) (left), the pyramid over \(\mathcal P_2\) (middle) and \(\mathcal P_2\) itself (right).

\bigskip
\begin{center}
  \begin{tikzpicture}[scale=1.7]
    \coordinate (1) at (1,0,0);
    \coordinate (-1) at (-1,0,0);
    \coordinate (2) at (0,1,0);
    \coordinate (-2) at (0,-1,0);
    \coordinate (3) at (0,0,1);
    \coordinate (-3) at (0,0,-1);

    \fill[gray!20] (1) -- (3) -- (-1) -- (2) -- cycle;
    \fill[gray!40] (1) -- (3) -- (-1) -- (-2) -- cycle;
    
    \draw[rpath, dashed]
      (-3) -- (1)
      (-3) -- (2)
      (-3) -- (-1)
      (-3) -- (-2);
    \draw[rpath]
      (1) -- (3) -- (-1)
      (2) -- (3) -- (-2);
    \draw[rpath] (1) -- (2) -- (-1) -- (-2) -- cycle;

    \begin{scope}[shift={(2.5,0,0)}]
      \coordinate (1) at (1,0,0);
      \coordinate (-1) at (-1,0,0);
      \coordinate (2) at (0,1,0);
      \coordinate (3) at (0,0,1);
      \coordinate (-3) at (0,0,-1);

      \fill[gray!20] (1) -- (3) -- (-1) -- (2) -- cycle;

      \draw[rpath, dashed]
        (-3) -- (1)
        (-3) -- (-1)
        (-3) -- (2);
      \draw[rpath] (3) -- (2);
      \draw[rpath] (1) -- (3) -- (-1) -- (2) -- cycle;
    \end{scope}

    \begin{scope}[shift={(5,0,0)}]
      \coordinate (1) at (1,0,0);
      \coordinate (-1) at (-1,0,0);
      \coordinate (3) at (0,0,1);
      \coordinate (-3) at (0,0,-1);

      \fill[gray!20] (1) -- (3) -- (-1) -- (-3) -- cycle;
      \draw[rpath] (1) -- (3) -- (-1) -- (-3) -- cycle;
    \end{scope}
  \end{tikzpicture}
\end{center}

\noindent
The fact that \(P_{d,k}(n)\) is a polynomial is not immediately clear from \eqref{eq:polynomial def}, but one can check that
\begin{equation*}
  P_{d,k}(n) = \sum_{i=0}^k \binom{k}{i} \binom{n + d - i}{d}.
\end{equation*}

The coefficients of Ehrhart polynomials have been studied extensively, and they are usually hard to understand -- it is uncommon to be able to fully describe the coefficients for a given polytope. It is difficult even to determine if the coefficients are nonnegative, see \cite{ehrpos}. In this paper, we will examine these questions for (pyramids over) cross-polytopes. We define \(c_{d,k}(j)\) by
\begin{equation} \label{eqn}
  d! P_{d,k}(n) = \sum_{j=0}^d c_{d,k}(j) n^j,
\end{equation}
where the factor \(d!\) ensures that every coefficient is integral. Our main results are a combinatorial interpretation of the coefficients \(c_{d,k}(j)\), and a bijective proof of equation \eqref{eqn}. In particular, we prove combinatorially that the Ehrhart coefficients for (pyramids over) cross-polytopes are positive, which answers a question posed by Stanley \cite[Chapter 4, Exercise 1(b)]{ecnewex}.

The outline of the paper is as follows. 
In Section \ref{sec:sri}, we construct a class of objects that are counted by the coefficients \(c_{d, k}(j)\), in particular showing that these coefficients are positive. 
In Section \ref{sec:bij}, we give a combinatorial argument that proves \eqref{eqn} by constructing objects that are counted by both sides of the equation and exhibiting a bijection. 
In Section \ref{sec:prop}, we use the combinatorial meaning we have for \(c_{d, k}(j)\) to obtain some properties of these coefficients. 
Finally, in Section \ref{sec:poly}, we illustrate how the method we have used might be helpful for studying the Ehrhart coefficients of other polytopes.

\bigskip\noindent
Before proceeding with the general results, we will briefly describe a proof strategy for the simplest case \(k = 0\). The same ideas will serve as our guiding light for the rest of the paper.

The numbers \(c_{d, 0}(j)\) are just unsigned Stirling numbers of the first kind. 
Recall that the (signed) Stirling number of the first kind \(s(m, i)\) is defined by
\begin{equation*}
    s(m, i) \coloneqq (-1)^{m - i} \cdot |\{\sigma \in S_m : \sigma \text{ has } i \text{ cycles}\}|.
\end{equation*}

\begin{proposition}
    For any $d \geq 1$ and $j \in [0, d]$, we have $c_{d, 0}(j) = |s(d + 1, j + 1)|$.
\end{proposition}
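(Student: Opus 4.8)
The plan is to give a combinatorial proof in the spirit of what follows, rather than manipulating generating functions. Setting $k = 0$ in the closed formula for $P_{d,k}(n)$ gives $P_{d,0}(n) = \binom{n+d}{d}$, so that $d!\, P_{d,0}(n) = (n+1)(n+2)\cdots(n+d)$. I would argue that this product counts a class of colored permutations of $[d+1]$, and then sort those permutations by their number of cycles.

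Specifically, I would build a permutation of $[d+1]$ one element at a time using the classical ``Chinese restaurant'' insertion: starting from the permutation $1 \mapsto 1$ on $\{1\}$, insert the elements $2, 3, \dots, d+1$ in order, where inserting $k$ into a permutation of $[k-1]$ means either placing $k$ immediately after one of the elements $1, \dots, k-1$ in its cycle, or creating the new fixed point $k$. To track powers of $n$, I give the ``new cycle'' option weight $n$ (think of it as choosing one of $n$ colors for the cycle being born) and each of the $k-1$ ``insert after an existing element'' options weight $1$. At step $k$ the available choices have total weight $n + (k-1)$, so summing over all insertion sequences gives total weight $\prod_{k=2}^{d+1}(n + k - 1) = (n+1)(n+2)\cdots(n+d) = d!\, P_{d,0}(n)$. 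The key structural observation is that this procedure is a weight-preserving bijection onto the set of permutations $\sigma$ of $[d+1]$ in which every cycle not containing the element $1$ has been assigned one of $n$ colors: a cycle is created precisely when its minimum is inserted as a new fixed point, which is exactly the moment its color is chosen, so the color data can be read off from, and reconstructed inside, the final colored permutation.

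Finally, I would collect terms by cycle count. A permutation $\sigma \in S_{d+1}$ with $j+1$ cycles has exactly $j$ cycles avoiding $1$, hence contributes total colored weight $n^j$; and the number of such permutations is $|s(d+1, j+1)|$ by definition of the Stirling numbers. Therefore $d!\, P_{d,0}(n) = \sum_{j=0}^d |s(d+1,j+1)|\, n^j$, and comparing with \eqref{eqn} yields $c_{d,0}(j) = |s(d+1,j+1)|$.

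I expect the only point needing real care is the verification that the weighted insertion process is genuinely a bijection onto colored permutations — in particular, that each colored permutation is produced exactly once, and that the cycles receiving colors are precisely those avoiding $1$. (A purely algebraic shortcut is to note that $n \cdot d!\, P_{d,0}(n) = n(n+1)\cdots(n+d)$ is the rising factorial $n^{\overline{d+1}} = \sum_i |s(d+1,i)|\, n^i$ and then divide by $n$; but this obscures the combinatorics that motivates the rest of the paper.)
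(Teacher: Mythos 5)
Your proof is correct, but it takes a different route from the paper's. The paper disposes of this proposition in two lines: it notes $P_{d,0}(n)=\binom{n+d}{d}$, so $d!\,P_{d,0}(n)=(n+1)(n+2)\cdots(n+d)$, and then simply cites the generating function $x(x-1)\cdots(x-m+1)=\sum_i s(m,i)x^i$ for the Stirling numbers --- essentially the ``algebraic shortcut'' you mention in your final parenthesis, dividing the rising factorial $n(n+1)\cdots(n+d)=\sum_i |s(d+1,i)|\,n^i$ by $n$. Your main argument instead proves the polynomial identity $d!\,P_{d,0}(n)=\sum_j |s(d+1,j+1)|\,n^j$ combinatorially for every integer $n\ge 0$, via the weighted Chinese-restaurant insertion: the check you flag (that the insertion process bijects onto permutations of $[d+1]$ with each cycle avoiding $1$ given one of $n$ colors, the color being chosen at the birth of the cycle, i.e.\ when its minimum enters as a fixed point) is the standard fact that insertion orders are recoverable by deleting $d+1, d, \ldots, 2$ in turn, so there is no gap; and agreement of two polynomials at all nonnegative integers gives equality of coefficients, exactly as the paper itself observes. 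What your route buys is precisely the kind of ``combinatorial meaning for the identity, not just for the numbers'' that the paper goes on to advocate: your coloring of the $j$ cycles avoiding $1$ by elements of $[n]$ is essentially the same data as the paper's labeled balls in its bijective sketch following the proposition (and as the $C_{d,k}(j)\times[n]^j$ side of Theorem \ref{thm:main}), though your mechanism is the weighted insertion process rather than the paper's construction via the fundamental bijection into slots between balls. The paper's official proof is shorter; yours foreshadows and partially reproves the $k=0$ case of the bijection of Section \ref{sec:bij}.
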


\begin{proof}
    From the definition of $P_{d, k}$, we see that $P_{d, 0}(n) = \binom{n + d}{d}$. 
    The result now follows from the fact that $s(m, i)$ is the coefficient of $x^i$ in $x(x - 1) \cdots (x - m + 1)$ (see, for example, \cite[Proposition 1.3.7]{ec1}).
\end{proof}

Although we could stop here and say we have given combinatorial meaning to the coefficients $c_{d, 0}(j)$, we would ideally like to give combinatorial meaning to the fact that for any $n \geq 0$,
\begin{equation*}
    d!P_{d, 0}(n) = \sum_{j = 0}^d c_{d, 0}(j)n^j.
\end{equation*}
Especially since the polynomials $P_{d, k}$ are Ehrhart polynomials, it makes more sense to interpret $c_{d, k}(j)$ in this context rather than doing so independently. 
Also note that if we show that $|s(d + 1, j + 1)|$ satisfies the above equation for all $n \geq 0$, then this implies that $c_{d, 0}(j) = |s(d + 1, j + 1)|$.

To do this, we construct objects counted by $d!P_{d, 0}(n) = d!\binom{n + d}{d}$. 
One choice is objects that consist of a permutation of size $d$ with $n$ balls distributed among the $d + 1$ slots created by the terms of the permutation. 
For example, for $d = 9$ and $n = 6$, such an object is
\begin{equation*}
    \emptycircle\, 281\, \emptycircle\, 9\, \emptycircle\, \emptycircle\, 354\, \emptycircle\, 67\, \emptycircle\,.
\end{equation*}
It can also be shown that these objects are counted by $d!P_{d, 0}(n)$ using the interpretation of $P_{d, 0}(n)$ as an Ehrhart polynomial (see Remark \ref{ehrbij}).

We interpret $|s(d + 1, j + 1)|n^j$ as choosing a permutation of size $d + 1$ with $j + 1$ cycles as well as assigning the labels $1, 2, \ldots, j$ to $n$ balls. 
Such a pair for $d = 8$, $j = 3$, and $n = 4$ is
\begin{equation*}
    (2) (7\ 3\ 5\ 4) (8\ 1) (9\ 6)\quad \text{and}\quad \emptycircle\, \overset{1,3}{\emptycircle}\, \emptycircle\, \overset{2}{\emptycircle}\,.
\end{equation*}

We write cycles in a permutation in \emph{canonical cycle form}: each cycle has its largest element first, and the cycles are ordered in increasing order of their largest elements. 
The labels on balls indicate which cycles must be placed before it. 
The label $1, 3$ on the second ball in the example above indicates that the first and third cycle should be placed before the second ball. 
We do this by placing the cycles as they are into that slot and then removing the brackets. 
This is just an instance of the fundamental bijection on permutations (see \cite[Proposition 1.3.1]{ec1}). 
The cycle containing $d + 1$ is placed after the last ball and then the brackets and $d + 1$ are deleted. 
Hence, the object corresponding to the example above is
\begin{equation*}
    \emptycircle\, 281\, \emptycircle\, \emptycircle\, 7354\, \emptycircle\, 6.
\end{equation*}

It can be checked that this is a bijection and gives us a proof that for all $n \geq 0$,
\begin{equation*}
    d!P_{d, 0}(n) = \sum_{j = 0}^d |s(d + 1, j + 1)| n^j
\end{equation*}
and in particular $c_{d, 0}(j) = |s(d + 1, j + 1)|$. 
We will generalize the interpretation we have for $c_{d, 0}(j)$ in Section \ref{sec:sri} and then in Section \ref{sec:bij}, we will present a bijection generalizing the one we have just seen.

\section{A sign-reversing involution}\label{sec:sri}

In this section, we give a combinatorial meaning to the numbers $c_{d, k}(j)$. 
To do this, we first obtain an expression for $c_{d, k}(j)$ in terms of Stirling numbers of the first kind.

\begin{lemma}\label{lem:str}
    For any $d \geq 1$ and $k, j \in [0, d]$, we have
    \begin{equation*}
        c_{d, k}(j) = \sum_{i = 0}^k \binom{k}{i} \sum_{\ell = 0}^{d - i} |s(d - i + 1, \ell + 1)|s(i, j - \ell).
    \end{equation*}
\end{lemma}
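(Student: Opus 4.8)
The plan is to start from the closed form $P_{d,k}(n) = \sum_{i=0}^{k}\binom{k}{i}\binom{n+d-i}{d}$ recorded in the introduction, multiply through by $d!$, and factor each summand into a product of two polynomials in $n$ whose coefficients are recognizably Stirling numbers. The key observation is that
\[
  d!\binom{n+d-i}{d} = \frac{(n+d-i)!}{(n-i)!} = \bigl[n(n-1)\cdots(n-i+1)\bigr]\cdot\bigl[(n+1)(n+2)\cdots(n+d-i)\bigr],
\]
where the $d$ consecutive factors running from $n-i+1$ up to $n+d-i$ have been split at $n$: the first bracket is the falling factorial of length $i$, and the second is the product of the $d-i$ consecutive integers starting at $n+1$.

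Next I would rewrite each bracket using Stirling numbers of the first kind. The falling factorial satisfies $n(n-1)\cdots(n-i+1) = \sum_{m\ge 0} s(i,m)\,n^m$ by the standard description of $s(i,m)$ as the coefficient of $x^m$ in $x(x-1)\cdots(x-i+1)$, which was already used in the proof above. For the second bracket, I would multiply and divide by $n$: since $n(n+1)(n+2)\cdots(n+d-i)$ is a rising factorial of length $d-i+1$, its coefficients are the unsigned Stirling numbers $|s(d-i+1,\ell)|$, and because $|s(d-i+1,0)|=0$ the division by $n$ is harmless, giving $(n+1)(n+2)\cdots(n+d-i) = \sum_{\ell\ge 0}|s(d-i+1,\ell+1)|\,n^\ell$.

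With both factorizations available, I would multiply the two polynomials, extract the coefficient of $n^j$ in $d!\binom{n+d-i}{d}$ as $\sum_{\ell}|s(d-i+1,\ell+1)|\,s(i,j-\ell)$, and restrict the inner sum to $\ell\in[0,d-i]$, the terms outside this range vanishing because $|s(d-i+1,\ell+1)|=0$ once $\ell>d-i$. Summing over $i$ against the weights $\binom{k}{i}$ and comparing with $d!P_{d,k}(n)=\sum_j c_{d,k}(j)\,n^j$ — a legitimate comparison of polynomial coefficients, since $d!P_{d,k}$ is known to be a polynomial — then yields the stated identity. The whole argument is a routine computation; the only place that calls for a little care is the index shift in the rising-factorial identity, where one must land on $|s(d-i+1,\ell+1)|$ rather than $|s(d-i,\ell)|$, together with keeping the empty-product conventions consistent in the boundary cases $i=0$ and $i=k$.
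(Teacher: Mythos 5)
Your proposal is correct and follows essentially the same route as the paper: expand $d!\binom{n+d-i}{d}$ as a product of $d$ consecutive factors, split it at $n$ into a falling factorial of length $i$ and the rising-type product $(n+1)\cdots(n+d-i)$, and read off the coefficients via the Stirling number generating functions. Your write-up just makes explicit the index-shift and empty-product details that the paper leaves to the cited generating-function identity.
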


\begin{proof}
    From the expression we have for $P_{d, k}(n)$, we get
    \begin{align*}
        d!P_{d, k}(n) &= \sum_{i = 0}^k \binom{k}{i} (n - i + 1) (n - i + 2) \cdots (n - i + d)\\
        &= \sum_{i = 0}^k \binom{k}{i} (n + 1)(n + 2) \cdots (n + d - i) \,\cdot\, n (n -1) \cdots (n - (i - 1)).
    \end{align*}
    As before, the result now follows from the generating function for the Stirling numbers of the first kind (see \cite[Proposition 1.3.7]{ec1}).
\end{proof}

Using the expression in the lemma above and a sign reversing involution, we will show that the objects defined below are counted by the numbers $c_{d, k}(j)$.

\begin{definition}\label{def:Cdk}
  Let \(C_{d,k}(j)\) denote the set of permutations of size \(d+1\) with \(j+1\) cycles, such that every cycle with all elements less than or equal to \(k\) is odd, and colored red or blue. The other cycles can be even or odd and are left uncolored. We also write
  \begin{equation*}
    C_{d,k} = \bigcup_j C_{d,k}(j).
  \end{equation*}
\end{definition}

\begin{example}
    The following is an element of $C_{14, 10}(4)$:
    \begin{equation*}
        \textcolor{red}{(4)} \textcolor{RoyalBlue}{(7\ 2\ 3)} \textcolor{red}{(10\ 6\ 1)} (14\ 11\ 5\ 12\ 13) (15\ 8).
    \end{equation*}
\end{example}

Just as before, we write permutations in their canonical cycle form: each cycle has its largest element first, and the cycles are ordered in increasing order by their largest elements. 
Hence, when we write an element of $C_{d, k}$, all cycles starting with a number in $[k]$ must be odd and are colored red or blue.

The following theorem answers \cite[Chapter 4, Exercise 1(b)]{ecnewex}.

\begin{theorem} \label{thm:main}
  For every \(d \geq 1\) and \(k, j \in [0, d]\), we have
  \begin{equation*}
    c_{d,k}(j) = |C_{d,k}(j)|
  \end{equation*}
  and hence $c_{d, k}(j)$ is a positive integer.
\end{theorem}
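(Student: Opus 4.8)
The plan is to prove the identity $c_{d,k}(j)=|C_{d,k}(j)|$ — and the positivity with it — by converting the signed sum of Lemma~\ref{lem:str} into a signed set of permutation data and cancelling most of it by a sign-reversing involution. First I would read each summand $\binom{k}{i}\,|s(d-i+1,\ell+1)|\,s(i,j-\ell)$ combinatorially: $\binom{k}{i}$ chooses a subset $S\subseteq[k]$ with $|S|=i$; $|s(d-i+1,\ell+1)|$ chooses a permutation $\sigma_1$ of the $(d+1-i)$-element set $[d+1]\setminus S$ with $\ell+1$ cycles; and $s(i,j-\ell)$ chooses a permutation $\sigma_2$ of $S$ with $j-\ell$ cycles, contributing the sign $(-1)^{i-(j-\ell)}$. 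Because $S\subseteq[k]\subseteq[d]$ we always have $d+1\notin S$, so the ground sets of $\sigma_1$ and $\sigma_2$ partition $[d+1]$ and their union is a permutation $\sigma\in S_{d+1}$ with $(\ell+1)+(j-\ell)=j+1$ cycles. Call a cycle of $\sigma$ \emph{small} if all of its elements lie in $[k]$; every cycle of $\sigma_2$ is small. One checks that specifying the triple $(S,\sigma_1,\sigma_2)$ is the same as specifying $\sigma$ together with a set $T$ of small cycles of $\sigma$ (from $(\sigma,T)$ one recovers $S$ as the union of the elements appearing in the cycles of $T$, then $\sigma_2=\sigma|_S$ and $\sigma_1=\sigma|_{[d+1]\setminus S}$), so Lemma~\ref{lem:str} says
\begin{equation*}
  c_{d,k}(j)=\sum_{(\sigma,T)}(-1)^{i-(j-\ell)},
\end{equation*}
the sum ranging over all pairs $(\sigma,T)$ with $\sigma\in S_{d+1}$ having $j+1$ cycles and $T$ a subset of the small cycles of $\sigma$.

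Next I would simplify the sign. For a permutation of an $m$-element set with $c$ cycles one has $m-c=\sum_{\text{cycles }C}(|C|-1)$, so $(-1)^{m-c}$ equals $-1$ raised to the number of even-length cycles; applied to $\sigma_2$, whose $j-\ell$ cycles are exactly the cycles in $T$ and whose ground set $S$ has size $i$, this gives $(-1)^{i-(j-\ell)}=(-1)^{e(T)}$, where $e(T)$ counts the even-length cycles in $T$. Thus $c_{d,k}(j)=\sum_{(\sigma,T)}(-1)^{e(T)}$. Now, on the pairs $(\sigma,T)$ for which $\sigma$ has at least one small cycle of even length, I would single out a distinguished such cycle depending only on $\sigma$ — say the one whose largest element is smallest — and toggle its membership in $T$; this leaves $\sigma$ unchanged, keeps $T$ a set of small cycles, flips the sign, and is a fixed-point-free involution, so all such pairs cancel. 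The surviving pairs are exactly those with every small cycle of $\sigma$ odd, each contributing $+1$ (since $e(T)=0$), and for a fixed such $\sigma$ the set $T$ may be any of the $2^{s(\sigma)}$ subsets of the collection of small cycles of $\sigma$, where $s(\sigma)$ is their number. Therefore $c_{d,k}(j)=\sum_{\sigma}2^{s(\sigma)}$, summed over $\sigma\in S_{d+1}$ with $j+1$ cycles all of whose small cycles are odd; since attaching a red or blue color to each of the $s(\sigma)$ small cycles is precisely the extra data that turns such a $\sigma$ into an element of $C_{d,k}(j)$, this equals $|C_{d,k}(j)|$. Finally, $|C_{d,k}(j)|>0$ because $C_{d,k}(j)\ne\emptyset$: for any admissible $d,k,j$ the permutation of $[d+1]$ whose cycles are the fixed points $1,\dots,j$ together with a single cycle on $\{j+1,\dots,d+1\}$ has $j+1$ cycles and only odd small cycles, so it (with any coloring of its small cycles) lies in $C_{d,k}(j)$.

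The crux of the argument is the first step: pinning down exactly what the alternating sum enumerates. One must notice that $\sigma_1$ may contain small cycles of its own — these simply land outside $T$ — that $S$ is genuinely recoverable from $(\sigma,T)$, and that the correspondence $(S,\sigma_1,\sigma_2)\leftrightarrow(\sigma,T)$ is a bijection respecting all the summation indices $i$, $\ell$ and $j$. Once this combinatorial model is in place, the sign computation, the involution, and the concluding count are all routine.
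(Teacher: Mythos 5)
Your proposal is correct and follows essentially the same route as the paper's Section~\ref{sec:sri} argument: interpreting the sum in Lemma~\ref{lem:str} as a signed count of permutations of $[d+1]$ with a marked/colored set of small cycles (your set $T$ is the paper's red cycles, its complement among the small cycles the blue ones), with sign given by the even cycles in $T$, and then cancelling via the same sign-reversing involution that toggles the status of the first small even cycle in canonical cycle form. The only cosmetic differences are your subset-$T$ bookkeeping in place of explicit red/blue colors and your explicit witness for non-emptiness of $C_{d,k}(j)$.
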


\begin{proof}
    Note that for any permutation $\sigma \in S_m$ with $i$ cycles, the parity of $m - i$ is the same as the number of even cycles in $\sigma$. 
    We now interpret the expression from Lemma \ref{lem:str} for $c_{d, k}(j)$ as the signed count of the following objects: 
    Permutations of size $d + 1$ with $j + 1$ cycles where
    \begin{itemize}
        \item any cycle with all elements in $[k]$ is colored either red or blue,
        \item any cycle containing an element not in $[k]$ is uncolored, and
        \item the sign associated to the permutation is the number of even cycles that are colored red.
    \end{itemize}

    We can view
    \begin{equation*}
        c_{d, k}(j) = \sum_{i = 0}^k \binom{k}{i} \sum_{\ell = 0}^{d - i} |s(d - i + 1, \ell + 1)|s(i, j - \ell)
    \end{equation*}
    as counting these objects by considering that $\binom{k}{i}$ and $s(i, j - \ell)$ in the expression choose the red cycles. 
    The remaining cycles are chosen by $|s(d - i + 1, \ell + 1)|$ and we color blue such cycles that only contain elements in $[k]$. 
    The reason that we color these cycles blue is just to make defining the sign-reversing involution more convenient.

    We now pair each such object that has a negative sign with an object that has a positive sign. 
    As indicated by the definition of $C_{d, k}$, we will define the sign-reversing involution on those objects that have at least one even cycle that is colored. 
    The involution is defined as follows: 
    When the permutation is written in canonical cycle form, find the first even cycle that is colored, and change its color.

    For example, an instance of this involution is
    \begin{gather*}
        \textcolor{RoyalBlue}{(6\ 4\ 3)} \textcolor{red}{(9\ 2)} \textcolor{red}{(11\ 7)} \textcolor{RoyalBlue}{(10\ 1)} (13\ 5\ 8\ 12)\\
        \updownarrow\\
        \textcolor{RoyalBlue}{(6\ 4\ 3)} \textcolor{RoyalBlue}{(9\ 2)} \textcolor{red}{(11\ 7)} \textcolor{RoyalBlue}{(10\ 1)} (13\ 5\ 8\ 12).
    \end{gather*}
    
    It is clear that this is an involution and since the number of red even cycles changes exactly by $1$, it is a sign-reversing involution. 
    Any object with a negative sign must have at least one red cycle that is even and is hence taken care of by this involution. 
    The only objects that remain are those in $C_{d, k}(j)$, which proves the result.
\end{proof}

For the case \(k = d\), using the definition of \(C_{d, d}(j)\) and properties of exponential generating functions (for example, see \cite[Chapter 5]{ec2}), it can be shown that
\begin{equation*}
    \sum_{d \geq 0} c_{d, d}(j) t^j \frac{x^d}{d!} = \left(\sum_{i \geq 0}x^i\right) \left(\sum_{i \geq 0}\frac{2x^{2i + 1}}{2i + 1}\right)^t.
\end{equation*}
The first factor on the right hand side chooses the cycle containing $d + 1$. 
The second factor partitions the remaining numbers into blocks of odd sizes and gives each block the structure of a red or blue cycle. 
This expression matches with the one computed by McKay on MathOverflow \cite{overflow}.

\section{The bijection}\label{sec:bij}

In this section, we prove \eqref{eqn} by constructing objects that are counted by both sides of the equation and exhibiting a bijection. 
In particular, this will give a bijective proof of Theorem \ref{thm:main}.

We start with a combinatorial interpretation of the the values \(d! P_{d,k}(n)\). Let \(W_{d,k}(n)\) be the collection of words in the alphabet
\begin{equation*}
  \emptycircle\,,\, \circled{1}\,,\, \circled{2}\,,\, \ldots,\, \circled{k}\,,\, 1,\, 2,\, \ldots,\,  d
\end{equation*}
subject to the following conditions:
\begin{itemize}
  \item Each number \(1, 2, \ldots, d\) appears exactly once (either circled or uncircled).
  \item There are exactly \(n\) circles (empty or with a number inside).
\end{itemize}
In other words, the elements of \(W_{d,k}(n)\) are words consisting of permutations of size \(d\) together with \(n\) circles, so that a circle can be written between two other entries or around an entry \(1, \ldots, k\) of the permutation. It is clear from the generating function of $P_{d, k}(n)$ that \(|W_{d,k}(n)| = d! P_{d,k}(n)\).

\begin{example}
  \(W_{3,1}(1)\) consists of the words
  \begin{equation*}
    123\, \emptycircle\,, \enskip 12\, \emptycircle\, 3, \enskip 1\, \emptycircle\, 23, \enskip \emptycircle\, 123, \enskip \circled{1}\, 23, \enskip 213\, \emptycircle\,, \enskip 2\, \circled{1}\, 3, \ldots.
  \end{equation*}
  It does not contain words such as \(1\, \circled{2}\, 3\), because \(2 > k\) and cannot be circled.
\end{example}

\begin{remark}\label{ehrbij}
    The fact that the objects described above are counted by $d!P_{d, k}(n)$ also follows by the interpretation of $P_{d, k}(n)$ as an Ehrhart polynomial. 
    More specifically, $P_{d, k}(n)$ counts points $\bm x \in \ZZ^d$, such that
    \begin{itemize}
        \item $|x_1| + \ldots +|x_k| + x_{k + 1} + \ldots + x_d \leq n$, and
        \item $x_{k + 1}, \ldots, x_d \geq 0$.
    \end{itemize}
    One way to represent a sequence of length $d$ of integers with sum of absolute values at most $n$ is as a sequence consisting of $n$ balls and $d$ dots. 
    For example, with $n = 6$ and $d = 3$, the sequence $(2, 0, 3)$ (with only nonnegative terms) is represented as
    \begin{equation*}
        \emptycircle[0pt]\, \emptycircle[0pt]\, {\bullet}\, {\bullet}\, \emptycircle[0pt]\, \emptycircle[0pt]\, \emptycircle[0pt]\, {\bullet}\, \emptycircle[0pt].
    \end{equation*}
    The total number of balls is $n$, the first entry is the number of balls before the first dot, and for any $i \geq 2$, the $i$-th entry is the number of balls between the $(i - 1)$-th and $i$-th dots. 
    If we want the $i$-th entry to be negative, we move the $i$-th dot into the ball immediately before it. 
    For example, the sequence $(-1, 3, -2)$ is represented as
    \begin{equation*}
        \circled[0pt]{\bullet}\, \emptycircle[0pt]\, \emptycircle[0pt]\, \emptycircle[0pt]\, {\bullet}\, \emptycircle[0pt]\, \circled[0pt]{\bullet}.
    \end{equation*}
    Hence, $P_{d, k}(n)$ counts such configurations with $n$ balls and $d$ dots where only the first $k$ dots can be in balls. 
    This interpretation can be used to show $d!P_{d, k}(n) = |W_{d, k}(n)|$.
\end{remark}

Our goal is to provide a combinatorial proof of Theorem \ref{thm:main}. The building blocks of our bijection are restricted versions of the \(n = 1\) case. The idea is that if we have a word such as
\begin{equation*}
  \circled{8}\, \emptycircle\, 517\, \circled{3}\, 4 \, \emptycircle\, 26
\end{equation*}
in \(W_{8,8}(4)\), we can treat the blocks
\begin{equation*}
  \circled{8}\,, \enskip \emptycircle\,,\enskip 517\,\circled{3}\,,\enskip 4\,\emptycircle\, \enskip \text{and} \enskip 26
\end{equation*}
separately and independently to obtain partial permutations of \(\{8\}\), \(\{5,1,7,3\}\), \(\{4\}\) and \(\{2,6,9\}\) (where \(9 = d+1\)) satisfying the odd, colored cycle condition of Definition \ref{def:Cdk}. The composition of these partial permutations will then give us an element of \(C_{8,8}\). Just as in the $k = 0$ case presented in Section \ref{sec:intro}, an element of \([n]^j\) will tell us which circle of the word is associated to each cycle in the final permutation (except the one containing \(d+1\), which will be placed last), making the correspondence bijective. We will explain this in more detail later.

Recall the \emph{fundamental bijection} \(S_d \to S_d\) defined by taking a permutation \(\pi\) in canonical cycle form and then removing the parentheses to obtain a permutation \(\widehat \pi\) written in one line notation. We will use this bijection extensively.

\begin{lemma} \label{lem:colored cycles to marked singleton}
  The collection \(C_{d,k}\) is in bijection with the set of all permutations of size \(d+1\) such that one singleton cycle \((i)\) with \(i \leq k\) may be colored red.
\end{lemma}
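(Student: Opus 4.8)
The plan is to reduce the statement to a purely local bijection and then build that bijection using the fundamental bijection.

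\emph{Reduction.} Call a cycle of a permutation of $[d+1]$ \emph{small} if all of its elements lie in $[k]$. In an element of $C_{d,k}$ the small cycles are exactly the coloured ones; they are odd, their union is some subset $S \subseteq [k]$, and the remaining cycles form an uncoloured permutation of $[d+1]\setminus S$ in which every cycle meets $\{k+1,\dots,d+1\}$. The target set admits the same decomposition: if a permutation of $[d+1]$ carries an optional red mark on a fixed point $i\le k$, then $(i)$ is itself a small cycle, so the permutation splits as a permutation of some $S\subseteq[k]$ with an optional red fixed point, together with a permutation of $[d+1]\setminus S$ having no small cycle. The ``$[d+1]\setminus S$'' part is literally the same data on both sides, so it suffices to produce, for every $S\subseteq[k]$, a bijection
\[
  \Phi_S \colon \{\text{partitions of } S \text{ into odd cycles, each coloured red or blue}\} \longrightarrow \{\text{permutations of } S \text{ with an optional red fixed point}\}.
\]
No restriction on the marked fixed point is needed here since every element of $S$ is already at most $k$; in particular $\Phi_\varnothing$ is forced.

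\emph{Building $\Phi_S$.} Both sides have the same size: writing $s=|S|$, the left-hand side is counted by $s!\,[x^s]\frac{1+x}{1-x}$, because by the computation recorded after Theorem \ref{thm:main} the exponential generating function for a single red-or-blue odd cycle is $\sum_{i\ge 0}\frac{2x^{2i+1}}{2i+1} = \log\frac{1+x}{1-x}$; this equals $2\cdot s!$ for $s\ge 1$ and $1$ for $s=0$. The right-hand side has $s!$ choices with no mark plus, by the identity that the total number of fixed points over all permutations of $S$ is $s!$, another $s!$ with a mark, so again $2\cdot s!$ for $s\ge1$ and $1$ for $s=0$. To upgrade this to an explicit bijection I would pass through the fundamental bijection: an odd cycle written with its largest element $a_1$ first is the same data as $a_1$ followed by the even-length word $a_2a_3\cdots$, read as a list of ordered pairs; stringing the cycles together in canonical order and applying the fundamental bijection identifies a partition of $S$ into odd cycles with a one-line word on $S$ whose gaps between successive left-to-right maxima are all odd, while the same bijection turns an arbitrary permutation of $S$ into an arbitrary one-line word. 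Matching the red/blue colouring against the single optional red fixed point is then the remaining step.

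\emph{Main obstacle.} The difficulty is entirely in constructing $\Phi_S$: the red cycles of a colouring must be amalgamated into at most one red fixed point, so $\Phi_S$ genuinely mixes cycles and cannot act cycle by cycle — for instance on $S=\{1,2,3\}$ it cannot send $3$-cycles to $3$-cycles, since the target has only two $3$-cycles but the source has four coloured ones. Once $\Phi_S$ is in hand, verifying that it is a bijection, that it glues correctly with the untouched ``$[d+1]\setminus S$'' part, and that it specialises sensibly in the easy cases (the case $k=0$, where $\Phi_\varnothing$ is the identity on all permutations of $[d+1]$; the case $k=1$, where $\Phi_{\{1\}}$ is just ``blue $\leftrightarrow$ unmarked, red $\leftrightarrow$ marked'') is routine.
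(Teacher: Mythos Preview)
Your reduction to a subset $S\subseteq[k]$ is correct, and the generating-function count showing both sides of $\Phi_S$ have size $2\cdot s!$ (for $s\ge 1$) is valid; so you have established that \emph{some} bijection exists. The gap is that you never actually construct $\Phi_S$. Your sketch via the fundamental bijection ends with ``Matching the red/blue colouring against the single optional red fixed point is then the remaining step,'' and you yourself flag this under ``Main obstacle'' as the whole difficulty. Since the lemma is used downstream as an explicit building block applied block-by-block inside the main bijection, an existence argument is not enough: the missing piece is precisely the content of the lemma.

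The paper supplies this missing map by a variant of the B\'ona--McLennan--White bijection, acting only on the red cycles (blue cycles are simply uncoloured, and cycles meeting $\{k+1,\dots,d+1\}$ are untouched --- this is exactly your ``restrict to $S$'' reduction). List the red cycles in canonical order as $\sigma_1,\dots,\sigma_\ell$; for each pair $(\sigma_{2i-1},\sigma_{2i})$ move the last entry of $\sigma_{2i-1}$ to the end of $\sigma_{2i}$, turning two odd cycles into two even ones, and if $\ell$ is odd the last entry of $\sigma_\ell$ is split off as the red singleton. The inverse reads off blue cycles as the odd ones, then unwinds the even cycles (and the optional red singleton) back into odd red cycles by a case analysis on the last entry of the last even cycle. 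This is precisely the cycle-mixing map you anticipated $\Phi_S$ must be; plugging it into your outline completes the proof.
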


\begin{remark}
  This set is enumerated by \((d+1)! + k d! = (d+k+1) d!\). The set is, furthermore, in bijection with the words in \(W_{d,k}(1)\) such that there are no numbers after the circle: if there is a red singleton cycle \((i)\), write \(i\) inside the circle and apply the fundamental bijection to the rest of the permutation. Then write the resulting permutation in one line notation before the circle. Otherwise, apply the fundamental bijection to the entire permutation and write it in one line notation before the circle.
\end{remark}

Our proof of Lemma \ref{lem:colored cycles to marked singleton} is a simple modification of the argument of Bóna, McLennan and White proving that the number of permutations of size \(2n\) with all cycles odd equals the number of permutations of size \(2n\) with all cycles even \cite{bona_permutations_2000}.

\begin{proof}[Proof of Lemma \ref{lem:colored cycles to marked singleton}]
  Let \(\pi \in C_{d,k}\). We leave the uncolored cycles of \(\pi\) unchanged. Then uncolor the blue cycles, otherwise leaving them unchanged. The red cycles \(\sigma_1, \ldots, \sigma_{\ell}\) are ordered in canonical cycle form: each cycle has its largest element first, and the cycles are ordered in increasing order by their largest elements. Suppose first that there is an even number \(\ell = 2m\) of red cycles. For each \(1 \leq i \leq m\), remove the last element of \(\sigma_{2i-1}\) and place it in the last position of \(\sigma_{2i}\). Uncolor the red cycles. If there is an odd number \(\ell = 2m+1\) of red cycles, apply the same procedure to \(\sigma_1, \ldots, \sigma_{2m}\), remove the last element of \(\sigma_{2m+1}\) and place it into its own singleton cycle. We uncolor all cycles, except for the new singleton cycle, which stays red. See Example \ref{ex:colored cycles to marked singleton}.

  To see that this is a bijection, we describe the inverse map. Let \(\sigma\) be a permutation of size \(d+1\), possibly with one red cycle \((i)\). We leave the cycles of \(\sigma\) containing entries greater than \(k\) unchanged. Uncolored odd cycles with all elements less than or equal to \(k\) are colored blue. What remains is a collection of even cycles with all elements less than or equal to \(k\), and possibly one red singleton cycle. We organize the cycles in canonical cycle form, except for the singleton which is placed last. If there is such a singleton, we remove it and place its element in the last position of the preceding cycle. This cycle is colored red. For the remaining cycles \(\tau_1, \ldots, \tau_h\) (still in canonical cycle form) we apply the following procedure:
  \begin{itemize}
      \item If the last entry of \(\tau_h\) is larger than the first entry of \(\tau_{h-1}\), remove the last entry of \(\tau_h\), place it into its own singleton cycle, and repeat the whole procedure for the remaining cycles \(\tau_1, \ldots, \tau_{h-1}\).
      \item If the last entry of \(\tau_h\) is smaller than the first entry of \(\tau_{h-1}\), remove it and place it in the last position of \(\tau_{h-1}\), and repeat the procedure for the remaining cycles \(\tau_1, \ldots, \tau_{h-2}\).
  \end{itemize} 
  We color all the resulting cycles red.
\end{proof}

\begin{example} \label{ex:colored cycles to marked singleton}
  Consider the permutation
  \begin{equation*}
    \textcolor{red}{(5\ 1\ 3)(6)} \textcolor{RoyalBlue}{(9\ 2\ 7)} \textcolor{red}{(13\ 8\ 12\ 10\ 11)} (16\ 14\ 15\ 4) \in C_{16,13}.
  \end{equation*}
  The blue and uncolored cycles are left unchanged and we map
  \begin{equation*}
    \textcolor{red}{(5\ 1\ 3)(6)(13\ 8\ 12\ 10\ 11)} \longmapsto (5\ 1)(6\ 3)(13\ 8\ 12\ 10) \textcolor{red}{(11)},
  \end{equation*}
  so the resulting permutation is
  \begin{equation*}
    (5\ 1)(6\ 3)(9\ 2\ 7)\textcolor{red}{(11)}(13\ 8\ 12\ 10)(16\ 14\ 15\ 4).
  \end{equation*}
\end{example}

We are ready to give a combinatorial proof of Theorem \ref{thm:main}.

\begin{proof}[Proof of Theorem \ref{thm:main}]
  We want to construct a bijection
  \begin{equation*}
    \bigcup_{j = 0}^d C_{d,k}(j) \times [n]^j \longrightarrow W_{d,k}(n).
  \end{equation*}
  We interpret the left-hand side as follows: each element of \(\pi \in C_{d,k}(j)\) is written in canonical cycle form. An element of \([n]^j\) assigns labels \(1, \ldots, j\) arbitrarily to a sequence of \(n\) circles, so that a circle can receive zero, one or several labels. The first \(j\) cycles of our permutation \(\pi\) are assigned to the circles according to these labels: the \(i\)-th cycle is assigned to the circle labeled \(i\). 
  
  We now have a sequence of pairs, each consisting of a circle and a (possibly empty) partial permutation of \(\pi\). Within each pair, we apply the bijection of Lemma~\ref{lem:colored cycles to marked singleton} to the partial permutation to obtain an uncolored permutation \(\sigma\) of the same entries, possibly with one singleton cycle colored red. If there is no red singleton, the pair is mapped to the word consisting of \(\widehat \sigma\) written in one line notation followed by an empty circle. If there is a red singleton \(i\), then the pair is mapped to the word consisting of \(\widehat \sigma\) written in one line notation with entry \(i\) removed, followed by a circle containing \(i\).

  To proceed, we concatenate the \(n\) words obtained in this way. To finish the construction, we apply the fundamental bijection to the last cycle of \(\pi\), write it in one line notation, remove the entry \(d+1\), and append it to our word. The result is an element of \(W_{d,k}(n)\).

  This construction is bijective. To prove this, we will now describe its inverse (see also Example \ref{ex:bijection}). Starting from an element of \(w \in W_{d,k}(n)\), take the (possibly empty) suffix of numbers after the last circle, prepend \(d+1\), and apply the inverse of the fundamental bijection to obtain a single cycle. The remaining part of \(w\) can be split into \(n\) blocks, each ending with a circle. Apply the inverse of the fundamental bijection to the numbers before the circle in each block. If the circle contains a number \(i\), place \(i\) into its own red singleton cycle in the resulting permutation. Now, apply the inverse of the bijection of Lemma~\ref{lem:colored cycles to marked singleton} to the partial permutations in each block. The composition of all resulting cycles gives us an element \(\pi \in C_{d,k}\). If the \(i\)-th cycle in the canonical cycle form of \(\pi\) came from the \(\ell\)-th block of \(w\), assign label \(i\) to the \(\ell\)-th circle. The pair consisting of \(\pi\) and the labeling of circles is the preimage of \(w\).
\end{proof}

\begin{example} \label{ex:bijection}
  Let 
  \begin{equation*}
    w \ = \ \circled{8}\, \emptycircle\, 517\, \circled{3}\, 4 \, \emptycircle\, 26 \ \in \ W_{8,8}(4)
  \end{equation*}
  The suffix of numbers gives us the cycle \((9\ 2\ 6)\). The remaining blocks are
  \begin{equation*}
    \left(\,\circled{8}\,,\ \emptycircle\,,\ 517\,\circled{3}\,,\ 4\,\emptycircle\,\right).
  \end{equation*}
  Applying the inverse of the fundamental bijection to each block, and adding a red singleton cycle consisting of the entry contained in the circle (if there is one), gives
  \begin{equation*}
    \left(\textcolor{red}{(8)},\ \epsilon,\ (5\ 1)(7)\textcolor{red}{(3)},\ (4)\right),
  \end{equation*}
  where \(\epsilon\) is the empty permutation. Applying the inverse of the bijection of Lemma~\ref{lem:colored cycles to marked singleton} to each partial permutation yields
  \begin{equation*}
    \left(\textcolor{red}{(8)},\ \epsilon,\ \textcolor{red}{(5\ 1\ 3)} \textcolor{RoyalBlue}{(7)},\ \textcolor{RoyalBlue}{(4)}\right).
  \end{equation*}
  After composing these cycles we get
  \begin{equation*}
    \pi \ = \ \textcolor{RoyalBlue}{(4)} \textcolor{red}{(5\ 1\ 3)} \textcolor{RoyalBlue}{(7)} \textcolor{red}{(8)} (9\ 2\ 6) \ \in \ C_{8,8}(4),
  \end{equation*}
  and the element of \([n]^j\) keeping track of which cycle of \(\pi\) is assigned to which circle is \((4, 3, 3, 1)\). In conclusion, \(w\) is mapped to the pair
  \begin{equation*}
    \textcolor{RoyalBlue}{(4)} \textcolor{red}{(5\ 1\ 3)} \textcolor{RoyalBlue}{(7)} \textcolor{red}{(8)} (9\ 2\ 6) \quad \text{and} \quad
    \overset{4}{\emptycircle}\, \emptycircle\, \overset{2,3}{\emptycircle}\, \overset{1}{\emptycircle}.     
  \end{equation*}
\end{example}

\section{Properties of the coefficients}\label{sec:prop}

We now obtain some results about the numbers \(c_{d, k}(j)\) using the combinatorial interpretations and bijections from previous sections. Some of these results also follow from known properties of the Ehrhart polynomials, but the combinatorial perspective might be interesting.

\begin{lemma}
    For any $d \geq 1$ and $k \in [0, d]$, we have $c_{d, k}(0) = d!$ and $c_{d, k}(d) = 2^k$.
\end{lemma}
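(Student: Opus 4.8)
The plan is to use the combinatorial interpretation $c_{d,k}(j) = |C_{d,k}(j)|$ from Theorem~\ref{thm:main}. Recall that $C_{d,k}(j)$ consists of permutations of $[d+1]$ with exactly $j+1$ cycles, where every cycle whose elements all lie in $[k]$ must be odd and colored red or blue, and all other cycles are uncolored (and unrestricted in parity).

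For $c_{d,k}(0)$: here $j+1 = 1$, so we need permutations of $[d+1]$ consisting of a single cycle. Since $d+1 > k$, that one cycle necessarily contains $d+1 \notin [k]$, hence it is uncolored and unrestricted. So $C_{d,k}(0)$ is just the set of $(d+1)$-cycles on $[d+1]$, of which there are $d!$. This gives $c_{d,k}(0) = d!$.

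For $c_{d,k}(d)$: here $j+1 = d+1$, so every permutation in $C_{d,k}(d)$ has $d+1$ cycles on $d+1$ elements, forcing every cycle to be a singleton. A singleton $(i)$ has all elements in $[k]$ precisely when $i \in [k]$, in which case it is a $1$-cycle (odd, so the parity condition is automatically satisfied) and must be colored red or blue — $2$ choices. The singletons $(i)$ with $i \in \{k+1, \ldots, d+1\}$ are uncolored with no further choice. Since the cyclic structure is completely determined, the only freedom is the $2$-way coloring of each of the $k$ singletons in $[k]$, giving $2^k$ elements in total. Hence $c_{d,k}(d) = 2^k$.

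I do not expect any real obstacle here: once the combinatorial model of Definition~\ref{def:Cdk} is in hand, both extreme cases reduce to counting permutations with a prescribed number of cycles equal to either $1$ or $d+1$, which pins down the cycle structure entirely and leaves only the coloring to account for. The only point requiring a moment's care is checking that the parity constraint (cycles inside $[k]$ must be odd) is vacuous in both cases — for $j=0$ because the long cycle is not inside $[k]$, and for $j=d$ because singletons are odd — so no configurations are excluded.
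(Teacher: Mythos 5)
Your argument is correct and is essentially identical to the paper's proof: both invoke $c_{d,k}(j) = |C_{d,k}(j)|$ and observe that $C_{d,k}(0)$ consists of the $d!$ single-cycle permutations of $[d+1]$, while $C_{d,k}(d)$ consists of the all-singletons permutation with the $k$ singletons in $[k]$ each colored red or blue, giving $2^k$. Your extra remark that the parity condition is vacuous in both cases is a fine (if implicit in the paper) sanity check.
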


\begin{proof}
    The elements in $C_{d, k}(0)$ are permutations of size $d + 1$ with only one cycle, and the elements of $C_{d, k}(d)$ are permutations of size $d + 1$ with $d + 1$ cycles, where the \(k\) cycles whose elements are in $[k]$ are colored red or blue.
\end{proof}

\begin{remark}
    The result above also follows from the fact that the constant term of the Ehrhart polynomial of a polytope is $1$ and that the leading term is its volume (see \cite[Corollary 3.20]{ccd}).
\end{remark}

\begin{lemma}
    For any $d \geq 1$ and $k, j \in [0, d]$, we have $c_{d, k}(j) \leq c_{d + 1, k}(j)$.
\end{lemma}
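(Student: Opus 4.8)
The plan is to exhibit an injection $C_{d,k}(j) \hookrightarrow C_{d+1,k}(j)$, which immediately gives $|C_{d,k}(j)| \leq |C_{d+1,k}(j)|$, and then invoke Theorem~\ref{thm:main} to conclude $c_{d,k}(j) \le c_{d+1,k}(j)$. An element of $C_{d,k}(j)$ is a permutation of $[d+1]$ with $j+1$ cycles obeying the odd/colored condition on cycles with all entries in $[k]$; an element of $C_{d+1,k}(j)$ is a permutation of $[d+2]$ with $j+1$ cycles obeying the same condition. Since $k$ is unchanged, the constraint is imposed on exactly the same set of "small" cycles (those with all entries $\le k$), so the natural move is to insert the new largest element $d+2$ into one of the existing cycles in a way that does not disturb any small cycle and does not change the cycle count.

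The concrete construction I would use: given $\pi \in C_{d,k}(j)$ written in canonical cycle form, let $\gamma$ be its last cycle (the one containing $d+1$, which is uncolored since $d+1 > k$). Replace $\gamma$ by the cycle obtained by inserting $d+2$ immediately after $d+1$ in $\gamma$. The result $\pi'$ is a permutation of $[d+2]$; it still has $j+1$ cycles; its last cycle now contains $d+2 > k$ so it remains legitimately uncolored; and every other cycle — in particular every small, colored cycle — is untouched, so all constraints of Definition~\ref{def:Cdk} for $C_{d+1,k}(j)$ are satisfied. This map is clearly injective: from $\pi'$ one recovers $\pi$ by deleting $d+2$ from the cycle containing it. Hence $|C_{d,k}(j)| \le |C_{d+1,k}(j)|$, and by Theorem~\ref{thm:main} the inequality $c_{d,k}(j) \le c_{d+1,k}(j)$ follows.

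I expect no serious obstacle here — the only thing to check carefully is that the inserted element lands in a cycle that is allowed to contain entries outside $[k]$ and that the cycle count is genuinely preserved (it is, since we enlarge an existing cycle rather than create a new one). One alternative worth mentioning is a purely algebraic proof: one could instead compare coefficients directly via Lemma~\ref{lem:str} or via the generating-function identity $\sum_n P_{d+1,k}(n)x^n = \frac{1}{1-x}\sum_n P_{d,k}(n)x^n$, which yields $P_{d+1,k}(n) = \sum_{m=0}^n P_{d,k}(m)$ and then a coefficient comparison after multiplying by $(d+1)!$; but the injection is cleaner and keeps the paper's combinatorial spirit, so that is the route I would take.
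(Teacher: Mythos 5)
Your proof is correct and essentially identical to the paper's: the paper also injects $C_{d,k}(j)$ into $C_{d+1,k}(j)$ by inserting $d+2$ into the cycle containing $d+1$ (the paper places it immediately before $d+1$ rather than after, an immaterial difference). Both maps are clearly injective since deleting $d+2$ recovers the original permutation, and the conclusion then follows from Theorem~\ref{thm:main} exactly as you say.
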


\begin{proof}
    Start with any element $\pi \in C_{d, k}(j)$ and add $d + 2$ to the cycle containing $d + 1$, immediately before $d + 1$. 
    This defines an injection from $C_{d, k}(j)$ into $C_{d + 1, k}(j)$.
\end{proof}

\begin{proposition}
  For every \(d \geq 1\), \(k \leq d-1\) and \(j\) we have that \(c_{d,k}(j) \leq c_{d,k+1}(j)\).
\end{proposition}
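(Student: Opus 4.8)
The plan is to exhibit an injection $C_{d,k}(j) \hookrightarrow C_{d,k+1}(j)$, mirroring the style of the two preceding lemmas. The only structural difference between $C_{d,k}(j)$ and $C_{d,k+1}(j)$ is that in the latter, the cycle containing the entry $k+1$ is also required to be odd and colored (red or blue) whenever all of its elements lie in $[k+1]$; in $C_{d,k}(j)$ that same cycle, if it contains $k+1$, is uncolored and may have any parity. So I need a way to ``fix up'' the cycle containing $k+1$ when it is problematic, without changing the number of cycles and without disturbing the cycles that are already colored.

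First I would take $\pi \in C_{d,k}(j)$ and locate the cycle $\gamma$ containing $k+1$. If every element of $\gamma$ exceeds $k+1$ — i.e. $\gamma$ contains an element not in $[k+1]$, or $\gamma$ is the singleton $(k+1)$ but... no: the relevant dichotomy is whether $\gamma$ contains an element outside $[k+1]$. If it does, then $\gamma$ is legitimately uncolored in $C_{d,k+1}(j)$ as well, and every other cycle of $\pi$ is unaffected by the change of $k$, so $\pi$ already lies in $C_{d,k+1}(j)$ and I map it to itself. The interesting case is when all elements of $\gamma$ are in $[k+1]$. Then in $C_{d,k+1}(j)$ we need $\gamma$ odd and colored. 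The idea is to borrow an element: use the bijection of Lemma~\ref{lem:colored cycles to marked singleton}, or more directly imitate the Bóna--McLennan--White swap used there. Concretely, I would pair $\gamma$ with the cycle $\delta$ containing $d+1$ (which always contains an element $> k+1$, so it is never colored and has no parity constraint): if $\gamma$ is even, move its last element (in canonical cycle form) into the last position of $\delta$, making $\gamma$ odd; then color the resulting $\gamma$ red. If $\gamma$ is already odd, simply color it red. This preserves the cycle count $j+1$, keeps $\delta$ uncolored and constraint-free, leaves every other cycle untouched, and produces an element of $C_{d,k+1}(j)$.

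To see injectivity I would describe the inverse on the image. Given $\rho \in C_{d,k+1}(j)$ in the image, I can tell which case it came from: if the red cycle containing $k+1$ (if any) was created by the map, I recognize it because... this is exactly the point where I need a cleaner invariant, and it is the main obstacle. The difficulty is that after the swap the ``borrowed'' element sits inside $\delta$, and I must be able to identify it and send it back — but a generic element of $C_{d,k+1}(j)$ whose $k+1$-cycle is red and odd could either have come from an already-odd $\pi$ (do nothing but recolor) or from an even $\pi$ (undo a swap). So the construction as stated is \emph{not} obviously injective; different preimages can collide. The fix is to use the full strength of Lemma~\ref{lem:colored cycles to marked singleton} as a black box: rather than an ad hoc swap, apply the lemma's bijection to the sub-permutation of $\pi$ consisting of the cycles with all elements in $[k+1]$ together with (a formal marker for) one reserved slot, arranging that the ``marked singleton'' case absorbs the parity defect in a reversible way. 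Cleanly packaging this — deciding exactly which cycles of $\pi$ to feed into the lemma and how to track the extra element so that the process is reversible — is the real content; once the right bookkeeping is set up, injectivity is immediate and the proof is short. I expect the cleanest write-up routes everything through Lemma~\ref{lem:colored cycles to marked singleton} applied to a carefully chosen induced sub-permutation, so that no new involution argument is needed.
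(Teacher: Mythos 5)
Your overall plan coincides with the paper's: leave \(\pi\) unchanged when the cycle containing \(k+1\) has an element greater than \(k+1\), and otherwise repair that cycle, either by just coloring it (odd case) or by moving its last element (in canonical cycle form) to the end of the cycle containing \(d+1\) and then coloring it (even case). But, as you yourself observe, the map you actually wrote down is not injective: you color the cycle containing \(k+1\) red in both subcases, so an element of \(C_{d,k+1}(j)\) whose cycle through \(k+1\) is red and odd could a priori arise either from an already-odd uncolored cycle or from an even one after a swap. The missing idea is small but essential: use the two available colors to record the case distinction. The paper colors the cycle \emph{blue} when it was already odd, and \emph{red} when it was even (after moving its last element, which is necessarily less than \(k+1\), to the end of the \((d+1)\)-cycle). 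The color of the cycle containing \(k+1\) then tells you exactly which of the three cases occurred, each case is separately reversible (do nothing; uncolor; or uncolor, remove the last element of the \((d+1)\)-cycle and append it back), the cycle count \(j+1\) is preserved, and injectivity is immediate.

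Your proposed repair, treating Lemma \ref{lem:colored cycles to marked singleton} as a black box applied to the sub-permutation of cycles contained in \([k+1]\), would not work in the form described: that bijection does not preserve the number of cycles (in the odd-red-count case it can create an extra singleton), and it uncolors the blue cycles and merges the red ones, so it destroys exactly the data you need to keep intact in order to land in \(C_{d,k+1}(j)\) with the same \(j\). So the gap is genuine: the construction as stated fails injectivity, the suggested fix is neither carried out nor viable as stated, and the correct resolution is the two-color bookkeeping above, which is precisely what the paper does.
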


\begin{proof}
  We will construct an injection \(C_{d,k}(j) \to C_{d,k+1}(j)\). Let \(\pi \in C_{d,k}(j)\). If \(k+1\) is contained in a cycle with some element greater than \(k+1\), leave \(\pi\) unchanged. If \(k+1\) is contained in a cycle with all elements less than or equal to \(k+1\) and the cycle is odd, then color the cycle blue. If the cycle is even, remove the last element (when written in canonical cycle form), set it as the last element of the cycle containing \(d+1\), and color the cycle containing \(k+1\) red. This is an injection, since the color of the cycle containing \(k+1\) tells us which case we are in.
\end{proof}

\begin{remark}
  The image of this mapping consists of all permutations in \(C_{d,k+1}(j)\) such that \(k+1\) is in a cycle with some element greater than \(k+1\), or \(k+1\) is in a blue cycle, or \(k+1\) is in a red cycle and the element before \(d+1\) in its cycle is less than \(k+1\). This means that the difference \(c_{d,k+1}(j) - c_{d,k}(j)\) counts the permutations in \(C_{d,k+1}(j)\) where \(k+1\) is in a red cycle and the element before \(d+1\) in its cycle is greater than \(k+1\).
\end{remark}

\begin{proposition} \label{prop:alternating sum k<d}
    For any $d \geq 1$ and $k \in [0, d - 1]$, we have
    \begin{equation*}
        \sum_{j = 0}^d (-1)^j c_{d, k}(j) = 0.
    \end{equation*}
\end{proposition}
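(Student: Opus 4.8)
The plan is to argue combinatorially. By Theorem~\ref{thm:main},
\(\sum_{j=0}^d (-1)^j c_{d,k}(j) = \sum_{j=0}^d (-1)^j |C_{d,k}(j)| = \sum_{\pi \in C_{d,k}} (-1)^{\ell(\pi)-1}\),
where \(\ell(\pi)\) denotes the number of cycles of \(\pi\). I would show this vanishes by producing a fixed-point-free involution on \(C_{d,k}\) that changes \(\ell(\pi)\) by exactly one; pairing each \(\pi\) with its image then cancels the signed sum, giving \(0\).

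The involution exploits the two largest elements \(d\) and \(d+1\). The hypothesis \(k \leq d-1\) enters precisely here: it guarantees \(d, d+1 \notin [k]\), so every cycle containing one of them is uncolored and subject to no parity condition. Given \(\pi \in C_{d,k}\) in canonical cycle form, I distinguish two cases. If \(d\) and \(d+1\) lie in a common cycle, write it as \((d+1\ a_1\ \cdots\ a_{t-1}\ d\ a_{t+1}\ \cdots\ a_r)\) and split it just before \(d\), into the two cycles \((d+1\ a_1\ \cdots\ a_{t-1})\) and \((d\ a_{t+1}\ \cdots\ a_r)\). If \(d\) and \(d+1\) lie in different cycles \((d+1\ b_1\ \cdots\ b_p)\) and \((d\ c_1\ \cdots\ c_q)\), merge them into \((d+1\ b_1\ \cdots\ b_p\ d\ c_1\ \cdots\ c_q)\). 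All other cycles, together with all colors, are kept unchanged. Because \(d\) is the second largest of \(1, \ldots, d+1\), it is the largest element of whichever cycle contains it but not \(d+1\), so every cycle produced is again in canonical cycle form; and since each new cycle contains an element greater than \(k\), the output genuinely lies in \(C_{d,k}\).

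What remains is routine verification: (a) the map is an involution — in the merged cycle the entry \(d\) sits immediately after the block originating from the cycle of \(d+1\), so cutting just before \(d\) undoes the merge, and merging the two pieces of a split cycle (the one containing \(d+1\) placed first) undoes the split; (b) it changes \(\ell(\pi)\) by exactly one, hence has no fixed point and flips the sign \((-1)^{\ell(\pi)-1}\). Summing over the resulting pairs yields \(\sum_{j=0}^d (-1)^j c_{d,k}(j) = 0\).

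I do not expect a serious obstacle: the only delicate point is the bookkeeping around canonical cycle form (including the degenerate splits where a piece is the singleton \((d+1)\) or \((d)\)) and checking that no color is ever created or destroyed — which is exactly what \(k \leq d-1\) buys, since if \(k = d\) the cycle of \(d\) could itself be a colored cycle and the argument breaks, consistent with the sum being \((-1)^d d! \neq 0\) in that case. It may also be worth remarking that this identity is the combinatorial shadow of the Ehrhart-reciprocity statement that the \((d-k)\)-fold pyramid over \(\mathcal P_k\) has no interior lattice point when \(k < d\), equivalently of the vanishing \(P_{d,k}(-1) = \sum_{i=0}^k \binom{k}{i}\binom{d-1-i}{d} = 0\), each summand being \(0\) since \(0 \le d-1-i < d\).
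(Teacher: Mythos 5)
Your proof is correct and follows essentially the same route as the paper: the paper also interprets the alternating sum as a signed count over \(C_{d,k}\) and cancels it with exactly this involution (splitting the common cycle just before \(d\), or appending the cycle of \(d\) to the end of the cycle of \(d+1\)), with \(k \leq d-1\) used precisely to keep both affected cycles uncolored. Your closing remark on Ehrhart reciprocity likewise matches a remark the paper makes after its Proposition on the \(k=d\) case.
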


\begin{proof}
    The proof is very similar to the analogous result for Stirling numbers of the first kind. 
    We define a sign-reversing involution on the objects in $C_{d,k}$ as follows: 
    Since $k < d$, both $d$ and $d + 1$ are in uncolored cycles. 
    If $d$ and $d + 1$ are in the same cycle, we match it with the object obtained by `breaking' the cycle before $d$ (permutation written in canonical cycle form). 
    If $d$ and $d + 1$ are in different cycles, we combine them by moving all the terms in the cycle with $d$ to the end of the cycle with $d + 1$. 
    An instance of this involution with $d = 14$ is
    \begin{gather*}
        \textcolor{red}{(4)} \textcolor{RoyalBlue}{(7\ 2\ 3)} \textcolor{red}{(10\ 6\ 1)} (12\ 5\ 11) (14 \ 13) (15\ 8)\\
        \updownarrow\\
        \textcolor{red}{(4)} \textcolor{RoyalBlue}{(7\ 2\ 3)} \textcolor{red}{(10\ 6\ 1)} (12\ 5\ 11) (15\ 8\ 14 \ 13). \qedhere
    \end{gather*}
\end{proof}

\begin{proposition} \label{prop:alternating sum k=d}
  For every \(d \geq 1\), we have
  \begin{equation*}
    \sum_{j=0}^d (-1)^{d-j} c_{d,d}(j) = d!.
  \end{equation*}
\end{proposition}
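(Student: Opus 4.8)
The plan is to reduce the alternating sum to a signed count over \(C_{d,d}\) whose sign depends only on the cycle containing \(d+1\), transport that count through Lemma~\ref{lem:colored cycles to marked singleton}, and finish with an elementary enumeration.

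First I would record what the sign \((-1)^{d-j}\) means for \(\pi \in C_{d,d}(j)\). As noted in the proof of Theorem~\ref{thm:main}, the parity of \(d-j = (d+1)-(j+1)\) equals the number of even cycles of \(\pi\). But in \(C_{d,d}\) every cycle not containing \(d+1\) is odd, so \(\pi\) has at most one even cycle, namely the cycle \(C_0\) containing \(d+1\) when its length is even. Hence \((-1)^{d-j} = (-1)^{|C_0|-1}\), and
\begin{equation*}
    \sum_{j=0}^d (-1)^{d-j} c_{d,d}(j) = \sum_{\pi \in C_{d,d}} (-1)^{|C_0(\pi)|-1},
\end{equation*}
where \(C_0(\pi)\) denotes the cycle of \(\pi\) containing \(d+1\). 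Next I would apply the bijection of Lemma~\ref{lem:colored cycles to marked singleton} with \(k=d\): it leaves every uncolored cycle untouched, and when \(k=d\) the cycle \(C_0\) is the only uncolored cycle of \(\pi\); moreover every element moved by the construction, and every singleton it creates, lies in \([d]\), so \(d+1\) never changes cycle and \(|C_0|\) is preserved. Thus the sum equals \(\sum_\rho (-1)^{|C_0(\rho)|-1}\), where \(\rho\) ranges over all permutations of \([d+1]\) that may carry one red singleton cycle \((i)\) with \(i \le d\).

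It then remains to evaluate this last sum, which is routine. For a distinguished element of an \(s\)-element set, the number of permutations in which it lies in a cycle of length \(L\) is \(\binom{s-1}{L-1}(L-1)!\,(s-L)! = (s-1)!\), independent of \(L \in \{1,\dots,s\}\). Applying this with \(s=d+1\) (distinguished element \(d+1\)) handles the permutations with no red singleton, contributing \(d!\sum_{L=1}^{d+1}(-1)^{L-1}\); applying it with \(s=d\) to the set \([d+1]\setminus\{i\}\) and summing over the \(d\) choices of \(i\) handles the rest, contributing \(d\cdot(d-1)!\sum_{L=1}^{d}(-1)^{L-1}\). Exactly one of the two alternating sums is \(1\) and the other is \(0\), according to the parity of \(d\), so the total is \(d!\) in every case.

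The only step that needs genuine care is checking that the bijection of Lemma~\ref{lem:colored cycles to marked singleton} really does fix the cycle containing \(d+1\) (a quick look at its construction and its inverse confirms this, since all relocation happens among elements of \([k]=[d]\)); everything else is bookkeeping. Alternatively, the identity also follows by substituting \(t=-1\) in the generating-function identity for \(\sum_j c_{d,d}(j)t^j\) recorded after Theorem~\ref{thm:main}, which collapses to \(\tfrac{1}{1+x} = \sum_{d\ge 0}(-1)^d x^d\) and yields \(\sum_j (-1)^j c_{d,d}(j) = (-1)^d d!\); but I prefer the self-contained combinatorial route above.
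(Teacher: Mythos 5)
Your argument is correct, and its first half coincides with the paper's proof: you observe, exactly as the paper does, that in \(C_{d,d}\) the sign \((-1)^{d-j}\) is governed solely by the parity of the cycle containing \(d+1\), and you transport the signed sum through the bijection of Lemma~\ref{lem:colored cycles to marked singleton}, correctly noting that with \(k=d\) this bijection only rearranges elements of \([d]\) and hence fixes the cycle containing \(d+1\). Where you diverge is the finish. The paper stays bijective: it defines a sign-reversing involution on the image set (split the last element of the \((d+1)\)-cycle into a red singleton, or merge the red singleton back), whose \(d!\) fixed points -- permutations with \(d+1\) a fixed point and no red singleton -- all carry positive sign. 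You instead evaluate the signed sum directly, using that for a distinguished element of an \(s\)-set the number of permutations placing it in a cycle of length \(L\) is \((s-1)!\) independently of \(L\), so the two contributions reduce to alternating sums \(\sum_{L=1}^{d+1}(-1)^{L-1}\) and \(\sum_{L=1}^{d}(-1)^{L-1}\), exactly one of which is \(1\); this is a clean, slightly more computational finish, while the paper's involution keeps the entire proof in the bijective spirit of the rest of the article and pinpoints an explicit \(d!\)-element set of survivors. One caveat on your closing aside: substituting \(t=-1\) works only if the displayed generating function after Theorem~\ref{thm:main} is read as \(\tfrac{1}{1-x}\bigl(\tfrac{1+x}{1-x}\bigr)^{t}\), i.e.\ as the exponential of \(t\sum_{i\ge 0}\tfrac{2x^{2i+1}}{2i+1}\); taken literally as a \(t\)-th power of a series with zero constant term the substitution is not a valid formal operation -- but your main argument does not depend on this remark.
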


\begin{proof}
  Associate the sign \((-1)^{d-j}\) to each object in \(C_{d,d}(j)\). These objects have \(j\) colored cycles, all of odd length, and one uncolored cycle (the one containing \(d+1\)). Hence, the sign of an object is determined by the parity of the length of the cycle containing \(d+1\). Since the objects in \(C_{d,d}(d)\) are positive, this means that an object has positive sign if and only if the cycle containing \(d+1\) is odd.

  Apply the bijection of Lemma \ref{lem:colored cycles to marked singleton} to \(C_{d,d}(j)\) to obtain the set \(A\) of permutations of size \(d+1\) in which one singleton cycle \((i)\) with \(i \leq d\) may be colored red. The bijection does not change the cycle containing \(d+1\), so the sign of each object can still be determined in the same way. We construct a sign-reversing involution defined on the objects \(\pi \in A\) as follows.
  \begin{itemize}
    \item Suppose \(\pi\) does not have a red singleton and \(d+1\) is not a fixed point. Denote the cycle containing \(d+1\) by \((a_1\ \ldots\ a_m)\), where \(a_1 = d+1\). Split the cycle into \((a_1\ \ldots\ a_{m-1})\textcolor{red}{(a_m)}\).
    \item Suppose \(\pi\) has a red singleton \(\textcolor{red}{(i)}\). Denote the cycle containing \(d+1\) by \((a_1\ \ldots\ a_{m-1})\), where \(a_1 = d+1\). Merge the red singleton into the cycle to obtain \((a_1\ \ldots\ a_{m-1}\ i)\).
  \end{itemize}
  This mapping is clearly involutive, and it is sign-reversing since the length of the cycle containing \(d+1\) changes by one. The fixed points of the involution are the permutations in \(A\) such that \(d+1\) is a fixed point and there is no red singleton. There are \(d!\) such permutations and they all have positive sign, so the result follows.
\end{proof}

\begin{example}
  For \(d = 2\), the objects in \(C_{2,2}\) are
  \begin{align*}
    & (3\ 1\ 2) & & \textcolor{red}{(1)}(3\ 2) & & \textcolor{red}{(1)(2)}(3) \\
    & (3\ 2\ 1) & & \textcolor{RoyalBlue}{(1)}(3\ 2) & & \textcolor{red}{(1)}\textcolor{RoyalBlue}{(2)}(3) \\
    & & & \textcolor{red}{(2)}(3\ 1) & & \textcolor{RoyalBlue}{(1)}\textcolor{red}{(2)}(3) \\
    & & & \textcolor{RoyalBlue}{(2)}(3\ 1) & & \textcolor{RoyalBlue}{(1)(2)}(3).
  \end{align*}
  The first and third columns have positive sign, while the second column has negative sign. After applying the bijection of Lemma \ref{lem:colored cycles to marked singleton}, the objects become
  \begin{align*}
    & (3\ 1\ 2) & & \textcolor{red}{(1)}(3\ 2) & & (2\ 1)(3) \\
    & (3\ 2\ 1) & & (1)(3\ 2) & & \textcolor{red}{(1)}(2)(3) \\
    & & & \textcolor{red}{(2)}(3\ 1) & & (1)\textcolor{red}{(2)}(3) \\
    & & & (2)(3\ 1) & & (1)(2)(3).
  \end{align*}
  The sign-reversing involution in the proof of Proposition \ref{prop:alternating sum k=d} matches
  \begin{align*}
    (3\ 1\ 2) &\longleftrightarrow \textcolor{red}{(2)}(3\ 1) & (3\ 2\ 1) &\longleftrightarrow \textcolor{red}{(1)}(3\ 2) \\
    (1)(3\ 2) &\longleftrightarrow (1)\textcolor{red}{(2)}(3) & (2)(3\ 1) &\longleftrightarrow \textcolor{red}{(1)}(2)(3),
  \end{align*}
  so the remaining objects are \((1)(2)(3)\) and \((2\ 1)(3)\).
\end{example}

\begin{remark}
    Propositions \ref{prop:alternating sum k<d} and \ref{prop:alternating sum k=d} are also a consequence of Ehrhart--Macdonald reciprocity (see \cite[Theorem 4.1]{ccd}) applied to (pyramids over) the cross-polytope. 
    This is because the cross-polytope has only one interior lattice point (the origin) and pyramids over them have none.
\end{remark}

\section{Other polytopes?}\label{sec:poly}

In this section, we mention how the methods we have used to study the Ehrhart polynomials of cross-polytopes could be used for other polytopes. We first recall some basic facts from Ehrhart theory; details can be found, for example, in \cite{ccd}. 
A \emph{lattice polytope} in $\RR^d$ is the convex hull of a finite subset of the lattice $\ZZ^d$. 
The dimension of a polytope is its affine dimension. 
The number of lattice points in dilates of a lattice polytope of dimension $d$ is counted by a polynomial of degree $d$. 
This is called its \emph{Ehrhart polynomial}.

For a lattice polytope $\mathcal P$ in $\RR^d$, we use $\ehr_{\mathcal P}$ to denote its Ehrhart polynomial. 
Hence, for any integer $n \geq 0$,
\begin{equation*}
    \ehr_{\mathcal P}(n) = |n\mathcal P \cap \ZZ^d|.
\end{equation*}
If $\mathcal P$ has dimension $d$, its \emph{$h^*$-polynomial} is the unique polynomial $h_{\mathcal P}^*(x)$ of degree at most $d$ such that
\begin{equation*}
    \sum_{n \geq 0}\ehr_{\mathcal P}(n)x^n = \frac{h^*_{\mathcal P}(x)}{(1 - x)^{d + 1}}.
\end{equation*}
For their coefficients, we write
\begin{equation*}
    \ehr_{\mathcal P}(n) = \sum_{j = 0}^d c_{\mathcal P}(j) n^j \quad \text{and} \quad h^*_{\mathcal P}(x) = \sum_{i = 0}^d c^*_{\mathcal P}(i) x^i,
\end{equation*}
respectively. 
For any $0 \leq i, j \leq d$, we set
\begin{equation*}
    e(d, i, j) \coloneqq \sum_{\ell = 0}^{d - i}|s(d - i + 1, \ell + 1)| s(i, j - \ell).
\end{equation*}
Note that these numbers need not be positive. 
Using the same computations as in Lemma \ref{lem:str}, the following observation follows.

\begin{proposition}
    For any \(d\)-dimensional lattice polytope \(\mathcal P\) and any \(j \in [0,d]\), we have that  
    \begin{equation}\label{polyeq}
        d! c_{\mathcal P}(j) = \sum_{i = 0}^d c^*_{\mathcal P}(i) \cdot e(d,i,j).
    \end{equation}
\end{proposition}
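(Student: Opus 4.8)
The plan is to run the computation of Lemma~\ref{lem:str} with an arbitrary $h^*$-polynomial in place of the specific one $(1+x)^k$. First I would expand the Ehrhart series using $\frac{1}{(1-x)^{d+1}} = \sum_{m \geq 0}\binom{m+d}{d}x^m$, which gives
\[
  \sum_{n \geq 0}\ehr_{\mathcal P}(n)\,x^n \;=\; \sum_{i=0}^d c^*_{\mathcal P}(i)\,\frac{x^i}{(1-x)^{d+1}} \;=\; \sum_{n \geq 0}\left(\sum_{i=0}^d c^*_{\mathcal P}(i)\binom{n-i+d}{d}\right)x^n.
\]
Here the sum over $i$ can be truncated at $d$ because $\deg h^*_{\mathcal P} \leq d$. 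Comparing coefficients of $x^n$, we get $\ehr_{\mathcal P}(n) = \sum_{i=0}^d c^*_{\mathcal P}(i)\binom{n-i+d}{d}$ for every integer $n \geq 0$; since both sides are polynomials in $n$, this is an identity of polynomials.

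Next I would multiply by $d!$, write $d!\binom{n-i+d}{d} = (n-i+1)(n-i+2)\cdots(n-i+d)$, and split this product exactly as in Lemma~\ref{lem:str}:
\[
  d!\,\ehr_{\mathcal P}(n) \;=\; \sum_{i=0}^d c^*_{\mathcal P}(i)\,(n+1)(n+2)\cdots(n+d-i)\,\cdot\, n(n-1)\cdots(n-i+1).
\]
Then I would invoke the generating function for the Stirling numbers of the first kind (\cite[Proposition 1.3.7]{ec1}): on one hand $n(n-1)\cdots(n-i+1) = \sum_m s(i,m)\,n^m$, and on the other hand $x(x+1)\cdots(x+d-i) = \sum_\ell |s(d-i+1,\ell)|\,x^\ell$, which at $x=n$ has zero constant term (since $|s(d-i+1,0)|=0$ for $i \le d$), so dividing by $n$ as polynomials yields $(n+1)(n+2)\cdots(n+d-i) = \sum_{\ell \geq 0}|s(d-i+1,\ell+1)|\,n^\ell$.

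Finally I would multiply these two expansions and read off the coefficient of $n^j$, which is $\sum_{\ell=0}^{d-i}|s(d-i+1,\ell+1)|\,s(i,j-\ell) = e(d,i,j)$ (out-of-range Stirling numbers vanish, so the index ranges match the definition of $e(d,i,j)$ automatically). Summing over $i$ gives $d!\,c_{\mathcal P}(j) = \sum_{i=0}^d c^*_{\mathcal P}(i)\,e(d,i,j)$, which is \eqref{polyeq}. I do not expect any real obstacle here: the argument is a direct transcription of Lemma~\ref{lem:str}, and the only points needing a line of justification are the truncation at $i=d$ and the fact that the coefficient extractions are legitimate because the displayed equalities hold as identities of polynomials in $n$.
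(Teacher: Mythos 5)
Your proposal is correct and follows the paper's own route: the paper proves this proposition by exactly the computation of Lemma \ref{lem:str} with the coefficients $c^*_{\mathcal P}(i)$ in place of $\binom{k}{i}$, which is precisely what you carry out (expansion of $\frac{x^i}{(1-x)^{d+1}}$, the split $(n+1)\cdots(n+d-i)\cdot n(n-1)\cdots(n-i+1)$, and the two Stirling expansions). The extra care you take about truncating at $i=d$ and about the identities holding as polynomials in $n$ is sound and fills in details the paper leaves implicit.
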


For cross-polytopes and pyramids over them, we have used this expression to give combinatorial meaning to the coefficients of their Ehrhart polynomials. 
It might be interesting to see what expressions we obtain for other polytopes.

For example, the unit cube in $\RR^d$ has Ehrhart polynomial $(n + 1)^d$ and $h^*$-polynomial $\sum_{i = 0}^{d - 1} A(d, i) x^i$ where $A(d, i)$ is the Eulerian number counting the permutations in $S_d$ with $i$ descents. 
Plugging this into \eqref{polyeq}, we get that for any $j \in [0, d]$,
\begin{equation*}
    d! \cdot \binom{d}{j} = \sum_{i = 0}^{d - 1} A(d, i) \cdot e(d, i, j)
\end{equation*} 
Although it is not difficult to give the binomial coefficients a combinatorial meaning, it would still be interesting to prove the above equation combinatorially.

A polytope is said to be \emph{Ehrhart positive} if the coefficients of its Ehrhart polynomial are positive. 
For example, cross-polytopes, pyramids over them, and unit cubes are all Ehrhart positive. 
We refer the reader to \cite{ehrpos} for a survey on Ehrhart positivity. 
For polytopes where the \(h^*\)-polynomial has a combinatorial interpretation, the methods used in this paper might help in finding combinatorial proofs for Ehrhart positivity.

\subsection*{Acknowledgements}

KM is supported by the Verg Foundation. EV is supported by the Swedish Research Council grant 2022-03875. The authors are grateful to Lobezno Vecchi for helpful discussions.

\printbibliography

\end{document}